\newcommand{\dd}{\mathrm{d}}
\newcommand{\real }{{\mathbb R}}
\newcommand{\lorentz }{{\mathbb L}}
\newcommand{\mink}{{\real_{1}^{4}}}
\newcommand {\lpr}[2]{{\langle}{#1},{#2}{\rangle}} 
\newcommand {\D}{{\mathcal D}}
\theoremstyle{plain}
\newtheorem{theo}{Theorem}[section]
\newtheorem{prop}[theo]{Proposition}
\newtheorem{lemma}[theo]{Lemma}
\newtheorem{corol}[theo]{Corollary}
\theoremstyle{definition}
\newtheorem{defn}[theo]{Definition}
\newtheorem{question}[theo]{Problem}
 \newtheorem{example}[theo]{Example}
\DeclareMathOperator{\Span}{span}
\DeclareMathOperator{\sech}{sech}
\DeclareMathOperator{\cosec}{cosec}
\begin{document}

\title{Chebyshev Nets in $\real^3$ and Minimal Timelike Surfaces in $\mink$}

\author{Antonio de Padua Franco Filho\and Alexandre Lymberopoulos}
\address{Departamento de Matem\'atica - Universidade de S\~ao Paulo}
\email{apadua@ime.usp.br, lymber@ime.usp.br}
\keywords{Björling problem, lightlike curve, Lorentz-Minkowski space}
\subjclass[2010]{Primary: 53B30}

\begin{abstract}
  In this work we provide necessary and sufficient conditions for the
  existence of a minimal timelike strip in Lorentz-Minkowski space
  $\mink$ containing a given lightlike curve and prescribed normal
  bundle. We also discuss uniqueness of solutions.
\end{abstract}

\maketitle

\section{Introduction}

The classical Bj\"orling problem can be formulated as follows: given a
real analytic curve $\alpha\colon I\subset\real\to\real^3$ and a unit
normal vector field $V\colon I\to\real^3$, along $\alpha$, determine a
minimal surface containing $\alpha(I)$ such that its normal vector along
the curve is $V$. The problem was firstly proposed and solved by
Bj\"orling himself in \cite{BJO} (1844). It was mentioned by Schwarz in
\cite{SCH} (1875) who also solved it, using a representation based on
holomorphic data, in \cite{SCH2} (1890).

Since then, many generalizations of this problem appeared in several
Riemannian and pseudo-Riemannian ambient manifolds. In $\real^3_1$
Alías, Chaves and Mira studied maximal spacelike surfaces in ~\cite{ACM}
and timelike minimal surfaces were studied by Chaves, Dussan and Magid
in ~\cite{CDM}, where both existence and uniqueness of solutions are
established. Analogous results are proved in $\mink$, for spacelike
surfaces in \cite{AV} by Asperti and Vilhena and, for timelike surfaces,
in \cite{DFM} by Dussan, Padua and Magid. The same holds for timelike
surfaces in $\real^4_2$ (see \cite{DM}). On Riemannian or Lorentzian Lie
Groups, Mercuri and Onnis, in \cite{MO}, and Cintra, Mercuri and Onnis,
in \cite{CMO}, also obtained results on existence and uniqueness of
solutions. In all those papers the authors make use of some kind of
Weierstrass representation formula, over complex or split-complex
domains.

The study of timelike minimal surfaces is important not only from the
mathematical point of view but also in physics, since they are solutions
for the wave equation and therefore can be regarded (classical)
relativistic strings.

In this work, without use of those complex or split-complex
representations, we provide necessary and sufficient conditions for the
existence of a solution for the Bj\"orling problem for a timelike
surface in $\mink$, when the prescribed curve is lightlike. In this case
we cannot expect uniqueness of solutions, which will be shown to be a
certain lift of a Chebyshev net in euclidean space $\real^3$.

\section{Algebraic preliminaries and the two kinds of Chebyshev nets}

The space $\real_{1}^{4}$ is the vector space $\real^{4}$ equipped with
the following semi-Riemannian metric tensor:
\[\dd s_{1}^{2} = - \dd x^{0} \otimes \dd x^{0} + \dd x^{1} \otimes \dd
  x^{1} + \dd x^{2} \otimes \dd x^{2} + \dd x^{3} \otimes \dd x^{3}.\]
We write this tensor in the inner product notation
$\langle v,w\rangle=\dd s_1^2(v,w)$. The standard basis of $\mink$ will
be denoted by $\{\partial_{0},\partial_{1},\partial_{2}, \partial_{3}\}$
and we set $\epsilon_i=\langle \partial_i,\partial_i\rangle$. If
$v=\sum\limits_{i=1}^4v_i\partial_i$, we have
$v_i=\epsilon_i\langle v,\partial_i\rangle$. A vector $v\in\mink$ is
\emph{spacelike} if $v=0$ or $\langle v,v\rangle>0$, \emph{timelike} if
$\langle v,v\rangle<0$ and \emph{lightlike} if $v\neq 0$ and
$\langle v,v\rangle=0$. In the same way, a \emph{spacelike plane} $V$ of
the space $\mink$ is a $2$-dimensional subspace for which the induced
bilinear form, $\dd s_{1}^{2}\big|_V$, is positive definite; we say that
$V$ is \emph{timelike plane} if $\dd s_{1}^{2}\big|_V$ is non-degenerate
and indefinite and it is \emph{lightlike} if $\dd s_{1}^{2}\big|_V$ is
degenerate.

Let $\{a,b\}$ be an orthonormal basis of a spacelike plane
$V \subset \mink$ and consider the unit timelike vector
\begin{equation}\label{eq:tau}
  \tau = \frac{1}{\sqrt{1 + a_{0}^{2} + b_{0}^{2}}} \left(\partial_{0} +
    a_{0}a + b_{0}b\frac{}{}\right)
\end{equation}
The standard wedge product of $u,w,w\in\mink$ is
$u\wedge v\wedge w \in\mink$, the unique solution for
$\langle u\wedge v\wedge w,x\rangle=\det(x,u,v,w)$. In matrix notation
we have the formal determinant \[u\wedge v\wedge w=\det
  \begin{pmatrix}
    \partial_{0}&\partial_{1}&\partial_{2}&\partial_{3}\\
    u_0&u_1&u_2&u_3\\
    v_0&v_1&v_2&v_3\\
    w_0&w_1&w_2&w_3
  \end{pmatrix}.\] Setting $\Delta_{ij} = a_{i}b_{j} - a_{j}b_{i}$ for
$0\leq i,j \leq 3$, we have the unit spacelike vector
\begin{equation}\label{eq:nu}
  \nu = -\tau \wedge a \wedge b = \Delta_{23} \partial_{1} - \Delta_{13}
  \partial_{2} + \Delta_{12} \partial_{3}.
\end{equation}

The $2$-dimensional vector subspace $T=\Span\{\tau,\nu\}$ is a timelike
plane which is the orthogonal complement of $V$. The $4$-uple
$(\tau,a,b,\nu)$ is a positive and future-directed frame, name
\emph{Minkowski frame} adapted to $\{a,b\}$.

Indeed, we see that $\lpr{\tau}{\tau} = -1$ and
$\tau_{0} = \sqrt{1 + a_{0}^{2} + b_{0}^{2}}\geq 1$, with
$\lpr{\tau}{a} = 0 = \lpr{\tau}{b}$. We also have that $\nu_{0} = 0$,
and $\lpr{\nu}{\nu} = 1$, because the set $\{\tau,a,b\}$ is an
orthonormal subset of $\mink$.  For each lightlike vector
$L = (L_{0},L_{1},L_{2},L_{3})$ we define its projection onto the unit
sphere $S^{2} \subset \{0\} \times \real^{3}$ by the formula
\begin{equation}
  \label{eq:projection}
  \pi(L) = (0, L_{1}/L_{0},L_{2}/L_{0},L_{3}/L_{0}).
\end{equation}

The vectors $\tau \mp \nu$ are lightlike. Hence we set
\begin{align}
  \label{eq:n0n3}
  n_{0} &= \pi(\tau - \nu) = (1/\tau_{0})(\tau - \nu) -
  \partial_{0}\mbox{ and}\nonumber\\
  n_{3} &= \pi(\tau + \nu) =
  (1/\tau_{0})(\tau + \nu) - \partial_{0}
\end{align}
to define a trigonometric angle $\theta \in ]0, \pi]$ in $V$ by
\begin{equation}
  \cos \theta = \lpr{n_0}{n_3} = 1 -
  \frac{2}{\tau^{2}_{0}}=\frac{a_{0}^{2} + b_{0}^{2}-1}{a_{0}^{2} +
    b_{0}^{2}+1}.
\end{equation}

\begin{prop}\label{prop:angle_theta}
  For the angle $\theta$ above we have \[\sin \theta =
    \frac{2\sqrt{a_{0}^{2} + b_{0}^{2}}}{\tau_0^2},\quad
    \sin(\theta/2)=\frac{1}{\tau_0},\quad\mbox{and}\quad\cos(\theta/2)=\frac{\sqrt{a_0^2+b_0^2}}{\tau_0}.\]
\end{prop}

The timelike plane
$T = \Span\{\partial_{0} + n_{0},\partial_{0} + n_{3}\}$ has induced
metric tensor represented, in this isotropic basis, by
\[g_{ij}=
  \left[\begin{matrix}
      0 & -1 + \cos \theta \\
      -1 + \cos \theta & 0
    \end{matrix} \right]=
  \left[\begin{matrix}
      0 & -2/\tau_{0}^{2} \\
      -2/\tau_{0}^{2} & 0
    \end{matrix} \right].\]

In the spacelike plane
$E = \Span\{n_{0},n_{3}\} \subset \{0\} \times \real^{3}$, with respect
to the given basis, it has the form
\[\hat{g}_{ij} =
  \left[\begin{matrix}
      1 & \cos \theta \\
      \cos \theta & 1
    \end{matrix} \right] =
  \left[\begin{matrix}
      1 & 1-2/\tau_{0}^{2} \\
      1-2/\tau_{0}^{2} & 1
    \end{matrix} \right].\]

Now, when $\tau_{0} > 1$ (that is,
$\vert a_{0} \vert + \vert b_{0} \vert \neq 0$) we define an orthonormal
basis $\{\tilde{e}_{1},\tilde{e}_{2}\}$ for the plane $V$ by
\begin{equation}\label{eq:convenient_base}
  \tilde{e}_{1} = \frac{1}{\sqrt{a_{0}^{2} + b_{0}^{2}}}(a_{0}a +
  b_{0}b)\quad\mbox{and}\quad
  \tilde{e}_{2} = \frac{1}{\sqrt{a_{0}^{2} + b_{0}^{2}}}(-b_{0}a + a_{0}b).
\end{equation}  

We note that $\Span\{\tilde{e}_{2}\} = V \cap \{0\} \times
\real^{3}$. Setting
\begin{equation}
  \label{eq:e-n0n3}
  e = \frac{1}{2 \cos(\theta/2)}(n_{0} + n_{3}) \in S^{2}
\end{equation}
we have the following result.

\begin{prop}
  On the above conditions, the following relations on the vectors of the
  (non-orthogonal) Minkowski frame $\{\tau, \tilde{e}_{1},e, \nu\}$
  hold:
  \begin{align*}
    \tau &= \frac{1}{\tau_0}\big(\partial_{0} +\sqrt{a_{0}^{2} +
    b_{0}^{2}}\;\tilde{e}_{1}\big) =
    \tau_0\partial_{0}+\tau_0\cos(\theta/2)\; e\mbox{ and}\\
    \tilde{e}_{1} &= \cot
    (\theta/2) \; \partial_{0} + \cosec (\theta/2) \; e.
  \end{align*}
\end{prop}
\begin{proof}
  The first identity comes from equations (\ref{eq:n0n3}) and
  (\ref{eq:e-n0n3}), where we see that
  \[\cos(\theta/2)\;e=\frac{n_0+n_3}{2}=\tau/\tau_0-\partial_0.\]

  For the second one, observe that $\tilde e_1$ is orthogonal to $\tau$
  and $\nu$. This means that $\tilde e_1=\alpha\partial_0+\beta e$, for
  some $\alpha, \beta\in\real$. From Proposition~\ref{prop:angle_theta},
  since $\partial_0$ and $e$ are mutually orthonormal, we have
  \begin{align*}
    \alpha&=-\lpr{\tilde{e}_1}{\partial_0}=\sqrt{a_0^2+b_0^2}=\cot(\theta/2)\\
    \beta&=\lpr{\tilde{e}_1}{e}=\tau_0=\cosec(\theta/2),
  \end{align*}
  as stated.
\end{proof}

Now, we will define Chebyshev nets as immersions in the Euclidean
vector space $\mathbb{E} = \{0\} \times \real^{3} \subset \mink$.

\begin{defn}
  We say that an immersion $(M,X)$ from a connected open subset
  $M \subset \real^{2}$ into the Euclidean space $\mathbb{E}$ is a
  Chebyshev net if and only if the coefficients of its first quadratic
  form, written as
  $\dd s^{2} = E(u,v)\,\dd u^{2} + 2F(u,v)\,\dd u \dd v + G(u,v)\,\dd
  v^{2}$, verifies, for all $(u,v) \in M$,
  \[E(u,v) = G(u,v) = 1 \mbox{ and } F(u,v) = \cos \theta(u,v) \in
    ]-1,1[.\]

  Associated to each Chebyshev net $(M,X)$ there is a timelike
  isotropic immersion $(M,f)$, \emph{the lift of $X$}, from $M$ into $\mink$
  defined by the formula
  \[f(u,v) = (u + v) \partial_{0} + X(u,v),\] whose induced metric
  tensor is
  \[g_{ij}(f) = \left[
      \begin{matrix}
        0 & -1 + F \\ -1 + F & 0 
      \end{matrix} \right] = \left[ 
      \begin{matrix}
        0 & -2\sin^{2}(\theta/2) \\ -2 \sin^{2}(\theta/2) & 0 
      \end{matrix} \right]\]
\end{defn}

If $(M,X)$ is a Chebyshev net, we consider the equivalent immersion
$(\overline{M},\overline{X})$ obtained applying the linear change of
coordinates $T\colon  \real^{2} \to\real^{2}$ given by:
\[t = u + v\quad\mbox{and}\quad s = -u + v,\mbox{ such that } \dd t \wedge
  \dd s = 2\, \dd u \wedge \dd v.\] That is, $\overline{M} = T(M)$ and
\begin{equation}
  \label{eq:equivimmersion}
  \overline{X}(t,s) = X\left(\frac{t - s}{2},\frac{t + s}{2}\right)=X(u,v).
\end{equation}

Now the metric tensor is given by
\[\dd s^{2}_{\overline{X}} = \overline{E}\,\dd t^{2} + \overline{G}\,\dd s^{2} =
  \cos^{2} (\theta/2)\,\dd t^{2} + \sin^{2} (\theta/2) \,\dd s^{2}.\]

The correspondent lift immersion
\[\overline{f}(t,s) = t \partial_{0} + \overline{X}(t,s)\] 
has isothermal parameters and its induced metric is
\[\dd s^{2}_{\overline{f}} = \sin^{2}(\theta/2)(-\dd t^{2} + \dd
  s^{2}).\]

\begin{theo}\label{teo:tcheblift_geom}
  Let $f(u,v) = (u + v) \partial_{0} + X(u,v)\in\mink$ be a lift of a
  Chebyshev net. The vector fields
  \begin{equation}\label{eq:normaltcheb1}
    \tilde{e}(u,v) = \frac{1}{\sin \theta(u,v)}\big((1 + \cos \theta(u,v)\big)
    \partial_{0} + X_{u}(u,v) + X_{v}(u,v)\big)
  \end{equation}
  and 
  \begin{equation}\label{eq:normaltcheb2}
    e_{2}(u,v) = \frac{1}{\sin \theta(u,v)} X_{u}(u,v) \times_{\real^{3}} X_{v}(u,v)
  \end{equation}
  form a spacelike orthonormal normal frame along $S = f(M)$.
  Moreover,
  the mean curvature vector $H_{f}(u,v)$ of the surface
  $S$ is pointwise parallel to the normal Gauss map $e_{2}(u,v)$ of
  the surface $X(M)\subset\mathbb{E}$.
\end{theo}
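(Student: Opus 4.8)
The plan is to treat the two assertions separately, since the normal frame is a direct computation whereas the mean-curvature claim rests on a single structural identity for Chebyshev nets.

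First I would record the tangent data. From $f(u,v)=(u+v)\partial_0+X(u,v)$ one has $f_u=\partial_0+X_u$ and $f_v=\partial_0+X_v$, and because $X$ takes values in $\{0\}\times\real^3$ the Minkowski and Euclidean products of $X_u,X_v$ coincide; the Chebyshev hypotheses therefore read $\lpr{X_u}{X_u}=\lpr{X_v}{X_v}=1$ and $\lpr{X_u}{X_v}=\cos\theta$. These give $\lpr{f_u}{f_u}=\lpr{f_v}{f_v}=0$ and $\lpr{f_u}{f_v}=\cos\theta-1$, recovering the stated $g_{ij}(f)$ and confirming that the tangent plane is timelike, hence non-degenerate, so the Minkowski normal projection is well defined. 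To verify the normal frame I would expand inner products. For $e_2$, note that $X_u\times_{\real^3}X_v$ lies in $\{0\}\times\real^3$, has Euclidean length $\sin\theta$, and is Euclidean-orthogonal to $X_u$ and $X_v$; hence $\lpr{e_2}{e_2}=1$, $\lpr{e_2}{\partial_0}=0$, and $\lpr{e_2}{f_u}=\lpr{e_2}{f_v}=0$. For $\tilde e$, the identities $1+\cos\theta=2\cos^2(\theta/2)$ and $\sin\theta=2\sin(\theta/2)\cos(\theta/2)$ together with the Chebyshev relations yield $\lpr{\tilde e}{f_u}=\lpr{\tilde e}{f_v}=0$, $\lpr{\tilde e}{\tilde e}=1$, and $\lpr{\tilde e}{e_2}=0$ (the last because $\tilde e$ has only a $\partial_0$-component and a tangential $X_u+X_v$-component, both orthogonal to the cross product). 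This establishes the orthonormal spacelike normal frame $\{\tilde e,e_2\}$, which must then span the whole normal bundle.

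For the mean curvature vector I would use $H_f=\tfrac12\,g^{ij}(f_{ij})^{\perp}$, where $\perp$ denotes Minkowski normal projection onto $\Span\{\tilde e,e_2\}$. Inverting the off-diagonal metric gives $g^{11}=g^{22}=0$ and $g^{12}=g^{21}=(\cos\theta-1)^{-1}$, so the diagonal second derivatives drop out and $H_f=(\cos\theta-1)^{-1}(f_{uv})^{\perp}$. Since the $\partial_0$-part of $f$ is affine, $f_{uv}=X_{uv}$.

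The decisive step is the structural identity. Differentiating $\lpr{X_u}{X_u}=1$ in $v$ and $\lpr{X_v}{X_v}=1$ in $u$ gives $\lpr{X_{uv}}{X_u}=\lpr{X_{uv}}{X_v}=0$, the classical fact that for a Chebyshev net the mixed second derivative is Euclidean-normal to $X(M)$, hence a multiple $\lambda\,e_2$ of the Gauss map. Because $X_{uv}\in\{0\}\times\real^3$ it is also orthogonal to $\partial_0$, so $\lpr{X_{uv}}{f_u}=\lpr{X_{uv}}{f_v}=0$ and $X_{uv}$ already lies in the normal bundle; thus $(f_{uv})^{\perp}=X_{uv}=\lambda\,e_2$ and $H_f=\lambda(\cos\theta-1)^{-1}e_2$, pointwise parallel to $e_2$. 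I expect this identity to be the only genuine content of the argument: everything else is bookkeeping, whereas recognizing that the Chebyshev condition forces $X_{uv}$ to be normal is exactly what makes the normal projection trivial and pins the direction of $H_f$ to the Gauss map.
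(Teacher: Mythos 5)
Your proposal is correct and follows essentially the same route as the paper's (very terse) proof: verify the frame properties by direct inner-product computations from the Chebyshev relations, derive $H_f=\frac{-1}{2\sin^2(\theta/2)}X_{uv}$ from the null form of the induced metric, and use the identity $\lpr{X_{uv}}{X_u}=\lpr{X_{uv}}{X_v}=0$ to conclude that $X_{uv}$ is normal and parallel to $e_2$. You have merely made explicit the "straightforward computations" the paper omits.
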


\begin{proof}
  Straightforward computations, using Chebyshev net properties, show
  the algebraic aspects of the statement.

  The coefficients of induced metric tensor on $f(M)$ give the mean
  curvature vector
  \begin{equation}
    \label{eq:Hlift}
    H_f=\frac{-1}{2\sin^2(\theta/2)} f_{uv} =
    \frac{-1}{2\sin^2(\theta/2)} X_{uv},
  \end{equation}
  which is orthogonal to $\tilde e$, hence parallel to $e_2$.
\end{proof}

\begin{prop}\label{prop:K_lift}
  The Gaussian curvature of a lift such as in Theorem
  \ref{teo:tcheblift_geom} is
  \begin{equation}
    \label{eq:K_lift}
    K=\frac{\theta_u\theta_v-\theta_{uv}\sin\theta}{(1-\cos\theta)^2}.
  \end{equation}
\end{prop}

\begin{proof}
  From~\cite[p. 443]{TL}, the Gaussian curvature of a parametric surface
  whose coordinates curves are lightlike is given
  by \[K=-\frac{1}{g_{12}}\left(\frac{(g_{12})_u}{g_{12}}\right)_v.\] In
  this case $g_{12}=-1+\cos\theta$.
\end{proof}

Recall that Gaussian curvature of any Chebyshev net satisfies the
equation $\theta_{uv}+K_T\sin\theta=0$. Hence we may rewrite
(\ref{eq:K_lift}) as
\begin{equation}
  \label{eq:K_liftK_T}
  K=\dfrac{\theta_u\theta_v+K_T\sin^2\theta}{(1-\cos\theta)^2}
\end{equation}

Now we will give two examples of Chebyshev nets, the first has a lift
with $H_{f} \equiv 0$ and the second is not a critical surface of
$\mink$.

\begin{example}[Critical lift]\label{ex:crit_tcheby}
  Set $U = ]-\pi/2,\pi/2[^{2}$ and consider the immersion
  $X:U\to\mathbb{E}$, given by 
  \[X(u,v) = \int_{0}^{u} (0,\cos \xi,\sin \xi,0)\,\dd \xi + \int_{0}^{v}
    (0,0,\sin \xi,\cos \xi)\, \dd \xi.\] Direct calculations show that:
  \begin{enumerate}
    \item the first quadratic form or metric tensor is
    \[\dd s^{2} = \dd u^{2} + 2\sin u \sin v \, \dd u \dd v + \dd v^{2};\]
    \item the normal Gauss map is
    \[e_{2}(u,v) = \frac{1}{\sqrt{1 - \sin^{2} u \sin^{2} v}}(0,\sin u
      \cos v, - \cos u \cos v, \cos u \sin v);\]
    \item the second quadratic form is
    \[B = \frac{-1}{\sqrt{1 - \sin^{2} u \sin^{2} v}}(\cos v du^{2} +
      \cos u dv^{2});\mbox{ and}\]
    \item the Gaussian curvature is
    \[K(u,v) = \frac{\cos u \cos v}{(1 - \sin^{2} u \sin^{2} v)^{2}} >
      0.\]
  \end{enumerate}

  The lift surface, $f(u,v) = (u + v) \partial_{0} + X(u,v)$, has
  vanishing mean curvature: one can see this from $X_{uv}=0$ in
  (\ref{eq:Hlift}) or noting that $f$ is a sum of two lightlike curves
  (see~\cite[p. 68]{C}).
\end{example}

\begin{lemma}
  Let $(W,Y)$ be an immersion from a connected open subset
  $W \subset \real^{2}$ into $\mathbb{E}$ with induced metric given by
  \[\dd s^{2}_Y = E(t,s)\,\dd t^{2} + G(t,s)\,\dd s^{2}.\] The equivalent
  immersion $(M,X)$ defined by $X(u,v) = Y(u + v,-u + v)$ is a
  Chebyshev net if and only if \[E(t,s) + G(t,s) = 1.\]
\end{lemma}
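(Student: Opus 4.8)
The plan is to compute the first fundamental form of the reparametrized immersion $X$ directly from that of $Y$ via the chain rule, and then simply read off the Chebyshev conditions. Writing $t = u+v$ and $s = -u+v$ so that $X(u,v) = Y(t(u,v),s(u,v))$, the partials satisfy $t_u = t_v = 1$, $s_u = -1$ and $s_v = 1$, whence
\[X_u = Y_t - Y_s \quad\text{and}\quad X_v = Y_t + Y_s.\]

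First I would invoke the hypothesis that the metric of $Y$ carries no cross term, i.e.\ $\lpr{Y_t}{Y_t} = E$, $\lpr{Y_s}{Y_s} = G$ and $\lpr{Y_t}{Y_s} = 0$. Substituting the expressions above and expanding the three inner products yields the coefficients of the first fundamental form of $X$:
\[E(X) = \lpr{X_u}{X_u} = E + G,\quad G(X) = \lpr{X_v}{X_v} = E + G,\quad F(X) = \lpr{X_u}{X_v} = E - G.\]
The crucial simplification is that the cross terms $\pm 2\lpr{Y_t}{Y_s}$ cancel precisely because the original parametrization is orthogonal; this is the only place the special form of $\dd s^2_Y$ is used.

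From these formulas the equivalence is immediate. For the forward direction, if $X$ is a Chebyshev net then $E(X) = 1$ already forces $E + G = 1$. Conversely, assuming $E + G = 1$ we obtain $E(X) = G(X) = 1$ and $F(X) = E - G$ at once; it then remains only to verify the open condition $F(X) = \cos\theta \in\,]-1,1[$. Since $\mathbb{E}$ is Euclidean and $Y$ is an immersion, its induced metric is positive definite, so $E > 0$ and $G > 0$ strictly; combined with $E + G = 1$ this gives $|E - G| < E + G = 1$, so that $F(X) \in\,]-1,1[$ holds automatically.

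I do not expect a genuine obstacle here: the statement reduces to a short chain-rule computation. The only subtlety worth flagging is that the strictness of $-1 < F(X) < 1$ (equivalently $\theta \in\,]0,\pi[$) rests on the positivity $E,G > 0$, which is furnished by $Y$ being an immersion into the Euclidean $\mathbb{E}$ rather than being hypothesized separately.
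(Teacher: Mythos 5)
Your proof is correct and follows essentially the same route as the paper: compute $X_u = Y_t - Y_s$, $X_v = Y_t + Y_s$, read off $E(X)=G(X)=E+G$ and $F(X)=E-G$, and conclude. Your treatment is in fact slightly more careful than the paper's, which only asserts $\lvert F(X)\rvert \leq 1$, whereas you correctly derive the strict inequality $\lvert E - G\rvert < 1$ from the positive definiteness of the Euclidean induced metric, as the definition of a Chebyshev net requires $F \in \left]-1,1\right[$.
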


\begin{proof}
  We only need to observe that:
  \begin{align*}
    X_{u}(u,v) &= Y_{t}(u + v,-u + v) - Y_{s}(u + v,-u + v),\\
    X_{v}(u,v) &= Y_{t}(u + v,-u + v) + Y_{s}(u + v,-u + v).
  \end{align*}
  
  Hence
  \begin{align*}
    \overline{E}(u,v) &= \overline{G}(u,v) = E(t,s) + G(t,s)\mbox{ and}\\
    \overline{F}(u,v) &= E(u + v, -u + v) - G(u + v, -u + v).
  \end{align*}

  If $E(t,s) + G(t,s)=1$ then $\overline{E}(u,v)= \overline{G}(u,v)=1$
  and, since $\vert \overline{F}(u,v) \vert \leq 1$, we have a smooth
  real valued function $\theta(u,v)$ from $M$ such that
  $F(u,v) = \cos \theta(u,v)$. The converse is trivial.
\end{proof}

\begin{example}[Non-critical lift]\label{ex:non_crit_tcheby}
  Let $Y\colon ]-\pi,\pi[ \times I\to\mathbb{E}$ be the parametric surface
  given by
  \[Y(t,s) = \big(0, x(s) \cos t, x(s) \sin t, y(s)\big).\] We have that
  the metric coefficient $F$ verifies $F(t,s) = 0$. Suppose that the
  other coefficients satisfy $E(t,s) + G(t,s) = 1$ and . In this case,
  the lift surface $f(t,s) = t \partial_{0} + Y(t,s)$ is isothermal and
  timelike.  In terms of equation (\ref{eq:Hlift}), to obtain a non
  critical surface we must have the equivalent immersion $X(u,v)$
  satisfying $X_{uv}\neq 0$, that is,
  $f_{tt}-f_{ss}=Y_{tt}-Y_{ss}\neq 0$. The ordinary differential
  equation imposed by the condition $E(t,s) + G(t,s) = 1$ is
  \[x^{2}(s) + (x'(s))^{2} + (y'(s))^{2} = 1.\]
  
  The functions
  \[x(s)=\frac{1}{2}\tanh s\quad\mbox{and}\quad y(s) = \frac{1}{2} \int_0^s
    \sqrt{4 - \tanh^{2} \xi - \sech^{4} \xi}\,\dd\xi,\] are a
  particular solution to this equation. Since, $y'' \neq 0$, we have
  $f_{tt}-f_{ss}\neq 0$ and $H_f\neq 0$.
\end{example}

\begin{defn}
  We say that a Chebyshev net $(M,X)$ is a Chebyshev net of first kind
  if and only if
  \[X(u,v) = p_{0} + \int_{0}^{u} T_{1}(\xi)\, \dd \xi + \int_{0}^{v}
    T_{2}(\xi)\, \dd\xi,\] for any disjoint curves
  $T_{1} \colon  I \to S^{2}\subset\mathbb{E}$ and
  $T_{2} \colon  J \to S^{2}\subset\mathbb{E}$ such that
  \[\{(u,v) \in I \times J \colon  T_{1}(u) = T_{2}(v)\} \cup \{(u,v) \in I
    \times J \colon  T_{1}(u) = -T_{2}(v)\} = \emptyset.\]
\end{defn}

\noindent\emph{Remark}: Example \ref{ex:crit_tcheby} above uses a Chebyshev net of
first kind.

\section{The Cauchy problem for Chebyshev nets and timelike minimal
  surfaces in $\mink$}

\begin{question}\label{problem:bjorling}
  Given a real analytic lightlike curve
  $c\colon ]-r,r[ \subset \real\to\mink$ and a spacelike distribution
  $\D(t) = \Span\big\{m(t),n(t)\big\}$ normal along this curve, establish necessary and
  sufficient conditions for the existence of a timelike minimal
  immersion $(M,f)$ from an open and connected subset $M$, where
  $I \times \{0\} \subset M \subset \real^{2}$, such that
  \begin{enumerate}
    \item the curve $c$ is the coordinate curve $f(t,0) = c(t)$,
    \item the normal bundle of $f(M)$ is the given distribution:
    $N_{c(t)}f(M) = \D(t)$.
  \end{enumerate}

  What can we say about uniqueness?
\end{question}

We start obtaining an integral representation for an isotropic timelike
minimal parametric surface $S \subset \mink$. In other words, every
timelike minimal surface in $\mink$ is the lift of a Chebyshev net of
first kind:

\begin{theo}
  For each timelike minimal surface $S \subset \mink$ and each point
  $P_{0} \in S$ there exists an open connected subset
  $I \times J \subset \real^{2}$ and a function
  $f \colon  I \times J \longrightarrow \mink$ such that $f(I \times J)$ is an
  open subset of the surface $S$, where
  \begin{equation}\label{eq:local_min_timelike}
    f(u,v) = P_{0} + (u + v) \partial_{0} + \int_{0}^{u} n_{0}(\xi)\,
    \dd\xi + \int_{0}^{v} n_{3}(\xi)\,\dd\xi,
  \end{equation} 
  and $n_{0} \colon  I \longrightarrow S^{2}$ and
  $n_{3} \colon  J \longrightarrow S^{2}$ are smooth curves on the unit sphere
  of the Euclidean space $\mathbb{E}$ such that
  $\{(u,v) \in I \times J \colon  \vert \lpr{n_{0}(u)}{n_{3}(v)} \vert = 1\} =
  \emptyset.$
\end{theo}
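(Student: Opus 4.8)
The plan is to introduce characteristic (null) coordinates on $S$ near $P_0$ and to recognise the minimal surface equation as a one-dimensional wave equation, whose general solution is a sum of two lightlike curves. First I would use that $S$ is timelike: the induced metric is Lorentzian, so at each point the tangent plane contains exactly two distinct lightlike directions. These vary real-analytically (minimal surfaces are real-analytic) and are transverse, so integrating the two null line fields yields two transverse foliations of a neighbourhood of $P_0$ by lightlike curves. Taking a first integral of each foliation as a coordinate gives null coordinates $(u,v)$ on some $I\times J\ni(0,0)$ with $f(0,0)=P_0$, in which the coordinate curves are lightlike, that is $\lpr{f_u}{f_u}=\lpr{f_v}{f_v}=0$ and $F:=\lpr{f_u}{f_v}\neq 0$. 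Choosing both null fields future-directed makes the $\partial_0$-components of $f_u$ and $f_v$ positive.

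Next I would pass from minimality to $f_{uv}=0$. Differentiating $\lpr{f_u}{f_u}\equiv 0$ in $v$ and $\lpr{f_v}{f_v}\equiv 0$ in $u$ gives $\lpr{f_{uv}}{f_u}=\lpr{f_{uv}}{f_v}=0$, so $f_{uv}$ lies in the normal bundle. Since the induced metric in these coordinates is $\left[\begin{matrix}0&F\\F&0\end{matrix}\right]$, the same computation leading to (\ref{eq:Hlift}) shows that the mean curvature vector is $H_f=F^{-1}(f_{uv})^{\perp}=F^{-1}f_{uv}$. Hence $H_f\equiv 0$ is equivalent to the wave equation $f_{uv}\equiv 0$, whose solutions are $f(u,v)=A(u)+B(v)$.

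Then I would normalise the two lightlike curves. Because $f_{uv}=0$, the velocity $A'(u)=f_u$ depends on $u$ alone and is a future-directed lightlike vector; dividing out its positive $\partial_0$-component by a reparametrisation of $u$ (which preserves both the splitting and the null character) yields $A'(u)=\partial_0+n_0(u)$ with $n_0(u)\in S^2$, and likewise $B'(v)=\partial_0+n_3(v)$ with $n_3(v)\in S^2$. Integrating from $0$ and absorbing $A(0)+B(0)=f(0,0)$ into the constant $P_0$ gives exactly (\ref{eq:local_min_timelike}). For the non-degeneracy, one computes $F=-1+\lpr{n_0(u)}{n_3(v)}$, so that $f$ being an immersion (equivalently $\det g=-F^{2}\neq 0$) already forces $\lpr{n_0}{n_3}\neq 1$; the remaining condition $\lpr{n_0}{n_3}\neq -1$ is precisely regularity of the Euclidean projection $X(u,v)=\int_0^u n_0+\int_0^v n_3$, since $X_u=n_0$ and $X_v=n_3$. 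As this is an open condition, after shrinking $I\times J$ about $(0,0)$ we obtain $\{(u,v)\colon|\lpr{n_0(u)}{n_3(v)}|=1\}=\emptyset$, so $S$ is the lift of a genuine Chebyshev net of first kind.

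I expect the main obstacle to be the clean construction of the null coordinates: one must check that the two lightlike line fields can be chosen consistently future-directed and smoothly over the patch, so that the d'Alembert representation $f=A(u)+B(v)$ is valid there. A secondary delicate point is the last step, where one must carefully distinguish the timelike-immersion condition $\lpr{n_0}{n_3}\neq 1$ from the stronger non-degeneracy $|\lpr{n_0}{n_3}|\neq 1$ characterising the associated Euclidean Chebyshev net.
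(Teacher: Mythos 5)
Your argument is correct and follows essentially the same route as the paper: both rest on writing the timelike minimal surface locally as a sum of two lightlike curves and then reparametrising each summand so that its $\partial_0$-component becomes the affine parameter, which forces the spatial parts $n_0,n_3$ onto $S^2$. The only real difference is that the paper imports the lightlike-sum decomposition wholesale from the reference [C, p.~68], whereas you derive it via null coordinates and the wave equation $f_{uv}=0$; your closing caveat is well placed, since the shrinking argument for $\lpr{n_0}{n_3}\neq-1$ needs that inequality at the base point, a point the paper's proof does not address either.
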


\begin{proof}
  It is well known (see~\cite[p. 68]{C}) that any open neighborhood of a
  timelike surface of $\mink$ admits a parametrization given by a sum
  of two lightlike curves
  \[p(t,s) = P_{0} + X(t) + Y(s),\] where
  $X(t) = X_{0}(t) \partial_{0} + \hat{X}(t)$ and
  $Y(s) = Y_{0}(s) \partial_{0} + \hat{Y}(s)$, for curves 
  $\hat{X}(t),\hat{Y}(s)\in\mathbb{E}$, and
  \[\frac{d}{dt}X_{0}(t) > 0 \quad\mbox{and}\quad\frac{d}{ds}Y_{0}(s) > 0,\] 
  for each $(t,s) \in I' \times J'$. We define the functions $t = t(u)$
  and $s = s(v)$ for $(u,v) \in I \times J$ such that
  \[f(u,v) = P_{0} + (u + v) \partial_{0} + \hat{X}(t(u)) +
    \hat{Y}(s(v)),\] $n_{0}(u) = \frac{d}{du}(\hat{X}(t(u)))$ and
  $n_{3}(v) = \frac{d}{dv}(\hat{Y}(s(v)))$.
\end{proof}

\begin{corol}
  If $(I \times J,f)$ is given by formula (\ref{eq:local_min_timelike})
  and $w = (u,v) \in I \times J$ then,
  \[\frac{\partial f}{\partial u}(u,v) = \partial_{0} + n_{0}(u) =
    l_{0}(u)\quad\mbox{and}\quad\frac{\partial f}{\partial v}(u,v) =
    \partial_{0} + n_{3}(v) = l_{3}(v)\] are lightlike vectors, the
  induced metric is
  $\dd s_{f}^{2} = (-1 + \cos \theta(w))\, \dd u\dd v$, and the normal
  bundle has a basis given by Theorem \ref{teo:tcheblift_geom} and
  formulas (\ref{eq:convenient_base}):
  \begin{align*}
    \tilde{e}_{1}(w)&= \cot\big( \theta(w)/2\big) \; \partial_{0} +
    \cosec
    \big(\theta(w)/2\big) \; e(w)\quad\mbox{and}\\
    e_{2}(w) &= \frac{1}{\sin \theta(w)} n_{0}(u) \times_{\real^{3}}
    n_{3}(v),
  \end{align*}
  where
  $e(w) = \dfrac{1}{2 \cos\big(\theta(w)/2\big)}\big(n_{0}(u) +
  n_{3}(v)\big) \in S^{2}.$ The immersion $(I \times J,X)$ defined by
\begin{equation}
X(w) = \int_{0}^{u} n_{0}(\xi)\,\dd\xi + \int_{0}^{v} n_{3}(\xi)\,\dd\xi,
\end{equation} 
is then a Chebyshev net of first kind.
\end{corol}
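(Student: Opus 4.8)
The plan is to treat the four assertions in turn, since each reduces to a direct computation backed by a result already established. First I would differentiate the integral representation (\ref{eq:local_min_timelike}) directly: because the two integrands each depend on a single variable, $\partial f/\partial u = \partial_0 + n_0(u)$ and $\partial f/\partial v = \partial_0 + n_3(v)$. To see these are lightlike I would use that each $n_i$ lies in $S^2 \subset \{0\}\times\real^3$, so $\lpr{n_i}{n_i} = 1$ and $\lpr{\partial_0}{n_i} = 0$, together with $\lpr{\partial_0}{\partial_0} = -1$; hence $\lpr{l_0}{l_0} = -1+1 = 0$, and likewise for $l_3$.

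For the induced metric I would then compute the three coefficients $g_{11} = \lpr{l_0}{l_0} = 0$, $g_{22} = \lpr{l_3}{l_3} = 0$, and $g_{12} = \lpr{l_0}{l_3} = -1 + \lpr{n_0(u)}{n_3(v)}$. The key identification here is that $\lpr{n_0(u)}{n_3(v)} = \cos\theta(w)$, which is exactly the pointwise version of the angle defined earlier by $\cos\theta = \lpr{n_0}{n_3}$; the standing hypothesis $\vert\lpr{n_0(u)}{n_3(v)}\vert \neq 1$ guarantees $\theta(w)\in\,]0,\pi[$, so the induced form is nondegenerate and indefinite, and this yields $\dd s_f^2 = (-1+\cos\theta(w))\,\dd u\,\dd v$.

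For the normal frame I would invoke Theorem~\ref{teo:tcheblift_geom} applied to the Chebyshev net $X$ with $X_u = n_0$ and $X_v = n_3$. The expression for $e_2$ is then immediate from (\ref{eq:normaltcheb2}). For $\tilde e_1$ I would substitute $X_u + X_v = n_0 + n_3 = 2\cos(\theta/2)\,e$ into the formula (\ref{eq:normaltcheb1}) for $\tilde e$ and simplify using the half-angle identities $1+\cos\theta = 2\cos^2(\theta/2)$ and $\sin\theta = 2\sin(\theta/2)\cos(\theta/2)$ coming from Proposition~\ref{prop:angle_theta}; this collapses $\tilde e$ to $\cot(\theta/2)\,\partial_0 + \cosec(\theta/2)\,e$, which is precisely $\tilde e_1$ as in formulas (\ref{eq:convenient_base}).

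Finally, to see that $X$ is a Chebyshev net of first kind, I would note that $X_u = n_0$ and $X_v = n_3$ are both valued in $S^2$, so $E = G = 1$ and $F = \lpr{n_0}{n_3} = \cos\theta \in\,]-1,1[$, verifying the Chebyshev condition. It then remains to check the disjointness requirement in the definition of first kind: $n_0(u) = n_3(v)$ forces $\lpr{n_0(u)}{n_3(v)} = 1$ and $n_0(u) = -n_3(v)$ forces $\lpr{n_0(u)}{n_3(v)} = -1$, both excluded by the hypothesis $\vert\lpr{n_0(u)}{n_3(v)}\vert\neq 1$. I expect no genuine obstacle here; the only step requiring any care is the half-angle bookkeeping that identifies $\tilde e$ with $\tilde e_1$, and that is resolved entirely by citing Proposition~\ref{prop:angle_theta}.
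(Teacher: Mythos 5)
Your proof is correct and follows exactly the route the paper intends: the corollary is stated in the paper without any proof, and your computations --- differentiating the integral representation to get the lightlike coordinate fields, reading off $g_{12}=-1+\cos\theta$, reducing $\tilde e$ to $\cot(\theta/2)\,\partial_0+\cosec(\theta/2)\,e$ via the half-angle identities of Proposition~\ref{prop:angle_theta}, and checking the first-kind disjointness condition against the hypothesis $\vert\lpr{n_0(u)}{n_3(v)}\vert\neq 1$ --- correctly supply all the omitted details. No gaps.
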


Now we can establish our main result:

\begin{theo}\label{teo:main}
  Let $c\colon I\subset\real\to\mink$,
  $c(t) = (c_{0}(t),c_{1}(t),c_{2}(t),c_{3}(t))$ be a given real
  analytic lightlike curve , and $\D(t) = \Span\big\{a(t),b(t)\big\}$ a
  normal and orthonormal spacelike distribution along this curve. A
  necessary and sufficient condition for the existence of a timelike
  minimal immersion $(I \times J,f)$ such that $f(t,0) = c(t)$ and the
  normal space along $c(t)$ is $N_{c(t)}f(M) = \D(t)$ is
  \begin{equation}\label{eq:bjorling_cond}
    c'(t) = c'_{0}(t)\big(\partial_{0} + n_{0}(t)\big)
  \end{equation} 
  where $n_{0}(t) = \pi(\tau(t) - \nu(t))$,
  $\pi$ is the projection defined by (\ref{eq:projection}), and the
  vectors $\tau$ and $\nu$ are given by (\ref{eq:tau}) and (\ref{eq:nu}),
  respectively.
\end{theo}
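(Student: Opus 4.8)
The plan is to prove the two implications separately, leaning on the structural results already in place: every timelike minimal surface is locally the lift of a Chebyshev net of first kind (the representation theorem and its Corollary), and the adapted Minkowski frame of the two Propositions recovers $\tau,\nu,n_0,n_3$ from any orthonormal pair $\{a,b\}$. For \emph{necessity}, suppose such an immersion $f$ exists. Then $c'(t)$ is tangent to $S=f(M)$ and, $c$ being lightlike, it is a null tangent vector. Since $\D(t)$ is the normal space, $c'(t)\perp\D(t)$, so $c'(t)$ lies in the timelike plane $T(t)=\D(t)^{\perp}=\Span\{\tau(t),\nu(t)\}$. A timelike plane carries exactly two null directions, and by (\ref{eq:n0n3}) these are spanned by $l_0=\partial_0+n_0=(1/\tau_0)(\tau-\nu)$ and $l_3=\partial_0+n_3=(1/\tau_0)(\tau+\nu)$. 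Hence $c'(t)$ is proportional to one of them; interchanging $a$ and $b$ sends $\nu\mapsto-\nu$ and thus $l_0\leftrightarrow l_3$, so we may fix the orientation of $\{a,b\}$ so that $c'(t)\parallel l_0(t)$. Reading off the $\partial_0$-component gives $c'(t)=c'_0(t)\bigl(\partial_0+n_0(t)\bigr)$ with $n_0=\pi(\tau-\nu)$, as claimed. Note that minimality is not used here, only that $\D$ is the normal plane of a surface through $c$.

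For \emph{sufficiency}, assume $c'(t)=c'_0(t)(\partial_0+n_0(t))$ with $n_0=\pi(\tau-\nu)$. A lightlike vector has nonzero time component, so $c'_0(t)\neq0$; taking $c$ future-directed we reparametrize by $u=c_0(t)-c_0(0)$, whence $\dd c/\dd u=\partial_0+n_0$. From the same frame set $n_3(t)=\pi(\tau(t)+\nu(t))\in S^2$, and define the product lift
\[f(u,v)=c(0)+(u+v)\partial_0+\int_0^u n_0(\xi)\,\dd\xi+\int_0^v n_3(\eta)\,\dd\eta.\]
By the Corollary this is the lift of a Chebyshev net of first kind — hence timelike minimal — provided $|\lpr{n_0(u)}{n_3(v)}|\neq1$; along $v=0$ one has $\lpr{n_0}{n_3}=\cos\theta=1-2/\tau_0^2\in{]-1,1[}$ when $|a_0|+|b_0|\neq0$, and analyticity propagates the strict inequality to a neighborhood $I\times J$. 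Integrating, $f(u,0)=c(u)$, so requirement (1) holds.

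It remains to identify $N_c f$ along $v=0$ with $\D$, and this is where I expect the real work to lie. By the Corollary the normal frame of the lift is $\tilde{e}_1(u,v),e_2(u,v)$, so $N_{f(u,v)}f=\Span\{l_0(u),l_3(v)\}^{\perp}$; at $v=0$ this reads $\Span\{l_0(u),l_3(0)\}^{\perp}$, while $\D(u)=\Span\{\tau(u),\nu(u)\}^{\perp}=\Span\{l_0(u),l_3(u)\}^{\perp}$. Thus the whole matter reduces to reconciling the transverse null line $[l_3(0)]$ frozen by the lift with the second null line $[l_3(u)]=[\tau(u)+\nu(u)]$ carried by $\D$. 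This coupling of the prescribed distribution to the constructed surface is the genuine obstacle: the hypothesis $c'\parallel l_0$ pins the first null direction, but the transverse one is invisible to $c'$, so controlling it demands the adapted-frame identities (the Proposition on $\theta$ and $\tilde{e}_1=\cot(\theta/2)\,\partial_0+\cosec(\theta/2)\,e$) together with the non-degeneracy $\cos\theta\neq\pm1$; once the tangent plane is shown to coincide with $T(u)$, its orthogonal complement is $\D(u)$, giving (2). Finally, because $n_3$ affects the surface only off the curve, any analytic $S^2$-extension of $n_3$ produces a further solution sharing the same pair $(c,\D)$, which is precisely the non-uniqueness anticipated in the introduction.
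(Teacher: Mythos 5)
Your necessity argument is sound and is essentially the paper's: $c'(t)$ is a null tangent vector orthogonal to $\D(t)$, hence lies in the timelike plane $\Span\{\tau,\nu\}$, whose only null directions are spanned by $\tau\mp\nu$; after fixing the orientation of $\{a,b\}$ so that $c'\parallel l_0$, the scalar is read off from the $\partial_0$-component. The gap is in sufficiency, and it is exactly the one you flag yourself but do not close. Your candidate surface $f(u,v)=c(0)+(u+v)\partial_0+\int_0^u n_0(\xi)\,\dd\xi+\int_0^v n_3(\eta)\,\dd\eta$ has $f_v(u,0)=\partial_0+n_3(0)$, a \emph{fixed} null vector, so its normal plane along the curve is $\Span\{l_0(u),l_3(0)\}^{\perp}$, which agrees with the prescribed $\D(u)=\Span\{l_0(u),l_3(u)\}^{\perp}$ only where $l_3(u)$ is parallel to $l_3(0)$. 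Writing ``once the tangent plane is shown to coincide with $T(u)$'' does not establish this coincidence; for a generic prescribed $\D$ it fails, and no choice of integrand in the $v$-integral repairs it, because any surface of the form $P_0+X(u)+Y(v)$ has its transverse null tangent direction along $v=0$ frozen at $Y'(0)$. Reconciling that frozen direction with the varying line $[l_3(u)]$ is the entire content of the sufficiency direction, not a detail to be deferred.

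The paper closes this gap by a specific construction that your proposal lacks: it introduces the Euclidean curve $\alpha(u)=c(u)-u\partial_0$, expands the prescribed $n_3(u)$ in the Frenet frame $\{T=n_0,N,B\}$ of $\alpha$ as $n_3=\cos\theta\,T+p\,N+q\,B$, and imposes that an extension $n_3(u,v)=\cos\theta(u,v)T(u)+p(u,v)N(u)+q(u,v)B(u)$ have vanishing $u$-derivative, which via the Frenet equations yields the system (\ref{eq:suffsystem}), equivalently (\ref{eq:suffsystem2}), with the prescribed data as initial conditions at $v=0$. Solving that system --- and the ensuing case analysis on $\theta_u$, which is what produces the planarity and helix restrictions discussed after the theorem and also exhibits the non-uniqueness (one solution for each admissible extension of $\theta$) --- is the actual argument. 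Your proposal correctly reduces the problem to this compatibility question and then stops, so as written it proves only necessity.
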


\begin{proof}
  The condition is necessary: if we have such an immersion, it can be
  written as $f(t,s) = P_0+X(t) + Y(s)$ and, from $f(t,0) = c(t)$ it
  follows that $c'(t) = f_{t}(t,s) = X_{t}(t)$ for each $s \in J$, with
  $\lpr{X_{t}(t)}{X_{t}(t)} = 0$.  The normal bundle of $f(I \times J)$,
  $\D(t,s)$, restricted to the curve, that is $s=0$, implies that $c'(t)$
  defines a lightlike direction orthogonal to $\D(t,0)$.  Let $l_{0}(t)$
  be this direction. Then $c'(t)$ and $l_{0}(t)=\partial_0+n_0(t)$ must
  be parallel to each other. The scalar in (\ref{eq:bjorling_cond}) is
  $c'_0(t)$, since the first coordinate of $l_0(t)$ is 1.

  The condition is also sufficient. Up to a changing of variables
  $t\leftrightarrow u$, if needed, we can suppose that
  $c'(u) = l_{0}(u)$.
  This defines a lightlike vector field $l_3$ along the curve, whose
  first coordinate is 1 and such that $\lpr{l_0(u)}{l_3(u)}<0$ and the
  vector field $n_3(u)=l_3(u)-\partial_0=\pi(\tau+\nu)\in S^2$.

  Now we need to extend the distribution $\D$, defined on $I$ to
  $\D(u,v)$, defined on $I\times J$.

  To do so, consider the curve
  \begin{equation}
    \alpha(u) = c(u) - u
    \partial_{0}\in\{k\}\times\real^3\equiv\mathbb{E},\mbox{ for some 
      $k\in\real$},
  \end{equation} 
  and let $\mathcal{F} = \{T(u) = n_{0}(u), N(u), B(u)\}$ be its Frenet
  frame. Since $\mathcal{F}$ is a basis of $\mathbb{E}$, there are
  functions $p,q\colon I\to\real$ such that, along $\alpha$, we have
  \begin{equation}
    \label{eq:n3(u)}
    n_3(u)=\cos \theta(u) T(u) + p(u) N(u) + q(u) B(u).
  \end{equation}
  In particular, $p^2(u)+q^2(u)=\sin^2\theta(u)$.

  Our aim is to provide extensions of the vector fields $n_0$ and $n_3$
  to $I\times J$ such that $n_0(u,v)=n_0(u)$ and $n_3(u,v)=n_3(v)$. For
  this, if such extension exists for $n_3$, we can extend, using the
  same notation, all of the functions in the coefficients of
  (\ref{eq:n3(u)}) to $I\times J$. The Frenet formulae for $\alpha$ lead
  to
  \[0 = - (\theta_u \sin \theta) T + (\kappa \cos \theta) N + p_{u} N +
    p(-\kappa T + \tau B) + q_{u} B - q \tau N,\] where $\kappa(u)$ and
  $\tau(u)$ are, respectively, the curvature and the torsion of
  $\alpha$.
  Hence the desired extensions must satisfy the following PDE system:
  \begin{equation}
    \label{eq:suffsystem}
    \begin{cases}
      \theta_u(u,v) \sin \theta(u,v) + \kappa(u) p(u,v) &= 0 \\
      p_u(u,v)+\kappa(u) \cos \theta(u,v) - \tau(u) q(u,v)&= 0 \\
      q_u(u,v) + \tau(u) p(u,v) &= 0,
    \end{cases}
  \end{equation}
  with initial conditions $p(u,0)=p(u), q(u,0)=q(u)$ and
  $\theta(u,0)=\theta(u)$ along the interval $I$. Since
  $p^2(u,v)+q^2(u,v)=\sin^2\theta(u,v)$ the above system is equivalent
  to
  \begin{equation}
    \label{eq:suffsystem2}
    \begin{cases}
      \kappa(u) p(u,v) &= -\theta_u(u,v) \sin \theta(u,v) \\
      \tau(u) q(u,v)&= p_u(u,v)+\kappa(u) \cos \theta(u,v) \\
      p^2(u,v)+q^2(u,v)&=\sin^2\theta(u,v),
    \end{cases} 
  \end{equation}
  with the same initial conditions. Hence, for each extension of the
  function $\theta$ to $I\times J$ we have functions $p,q$ determined.

  We set
  \[n_{3}(v) = \cos \theta(u,v) T(u) + p(u,v) N(u) + q(u,v) B(u),\]
  which depends, by construction, only on $v$ allowing us to build the
  tangent lightlike vector, $l_{3}(v)$.
  In this way the immersion $f\colon I\times J\to\mink$ given by
  (\ref{eq:local_min_timelike}) is a local solution to Question
  \ref{problem:bjorling}.
\end{proof}

In system (\ref{eq:suffsystem2}) if $\theta_u(u,v)\neq 0$ we see that
$\theta_u(u,v)=-\kappa(u)$ or $p(u,v)\equiv0$, and $q(u,v)\equiv
0$.
Since $p$ and $q$ cannot both vanish simultaneously, we have from last
equation in (\ref{eq:suffsystem}) that $\tau(u)\equiv 0$, that is
$\alpha$ is a planar curve.

On the other side, if $\theta_u(u,v)\equiv0$ then either
$\kappa(u)\equiv0$ or $p(u,v)\equiv0$. The former case says the $\alpha$
is a straight line in $\mathbb{E}$, implying that $c(u)$ is a lightlike
straight line in $\mink$. Here the immersion has the form
\[f(u,v)=u\vec{l_0}+v\partial_0+\int_0^v n_3(\xi)\,\dd\xi.\] for some
constant lightlike vector $\vec{l_0}$. In the latter case,
$q(u,v)=\sin\theta(u,v)$ and, noting that $\theta(u,v)=\theta(v)$, we
have $\tan(\theta(v))=\kappa(u)/\tau(u)$. That is, both $\theta(u,v)$
and $\kappa(u)/\tau(u)$ are constants. In particular $\alpha$ is an
helix.  From equation (\ref{eq:K_lift}) in Proposition~\ref{prop:K_lift}
we have that such surfaces are planar. From (\ref{eq:K_liftK_T}) we
conclude that this timelike surface is the lift of a planar Chebyshev
net in $\mathbb{E}$.

We finally observe that we obtain existence and non-uniqueness of
solutions for the Björling problem in $\lorentz^3=\real^3_1$ with
initial data given by the lightlike curve $\gamma\colon I\to\lorentz^3$ and
normal vector field $n\colon I\to S^2$, using Theorem \ref{teo:main} with
$c(t)=(\gamma(t),0)$, $a(t)=(n(t),0)$ and $b(t)=e_4$. An explicit
example of non-uniqueness is Example 3.2 in~\cite{CDM}.

\bibliographystyle{plain}
\bibliography{padua_lymber-llbj-bib}

\begin{thebibliography}{10}

\bibitem{ACM}
L.~Alías, R.M.B. Chaves, and P.~Mira.
\newblock {Bj\"orling problem for maximal surfaces in Lorentz-Minkowski space}.
\newblock {\em Math. Proc. Cambridge Philos. Soc.}, 134:289--316, 2003.

\bibitem{AV}
A.C. Asperti and J.A.M. Vilhena.
\newblock {Bj\"orling problem for spacelike, zero mean curvature surfaces in
  $\mathbb{L}^4$}.
\newblock {\em J. Geom. Phys.}, 56:196--213, 2006.

\bibitem{BJO}
E.G. Bj\"orling.
\newblock In integrationem aequationis derivatarum partialum superfici, cujus
  in puncto unoquoque principales ambo radii curvedinis aequales sunt sngoque
  contrario.
\newblock {\em Arch. Math. Phys.}, 4(1):290--315, 1844.

\bibitem{CDM}
R.M.B. Chaves, M.P. Dussan, and M.~Magid.
\newblock {Bj\"orling problem for timelike surfaces in the Lorentz-Minkowski
  space}.
\newblock {\em J. Math. Anal. Appl.}, 377:481--494, 2011.

\bibitem{C}
B-Y. Chen.
\newblock {\em Pseudo-Riemannian Geometry $\delta$-invariants and
  applications}.
\newblock World Scientific, 2011.

\bibitem{CMO}
A.A. Cintra, F.~Mercuri, and I.I. Onnis.
\newblock {The Bj\"orling problem for minimal surfaces in a Lorentzian
  three-dimensional Lie group}.
\newblock {\em Ann. Mat. Pura Appl.}, 195:95--110, 2016.

\bibitem{DFM}
M.~P. Dussan, A.~P.~Franco Filho, and M.~Magid.
\newblock {The Bj\"orling problem for timelike minimal surfaces in $\mink$}.
\newblock {\em Ann. Mat. Pura Appl.}, 196:1231--1249, 2017.

\bibitem{DM}
M.P. Dussan and M.~Magid.
\newblock {The Bj\"orling problem for timelike surfaces in $\real^4_2$}.
\newblock {\em Journal of Geometry and Physics}, 73:187--199, 2013.

\bibitem{MO}
F.~Mercuri and I.~Onnis.
\newblock {On the Bj\"orling problem in the three dimensional Lie groups}.
\newblock {\em Illinois J.Math.}, 53:431--440, 2009.

\bibitem{SCH}
H.~A. Schwarz.
\newblock {Miscellen aus dem Gebiete der Minimalfl\"achen}.
\newblock {\em J. Reine Angew. Math.}, 80:280--300, 1875.

\bibitem{SCH2}
H.~A. Schwarz.
\newblock {\em Gesammelte mathematische abhandlungen}.
\newblock Springer-Verlag, Berlin, 1890.

\bibitem{TL}
I.C. Terek and A.~Lymberopoulos.
\newblock {\em Introdução à Geometria Lorentziana: curvas e superfícies}.
\newblock Sociedade Brasileira de Matemática, 2018.

\end{thebibliography}

\end{document}